\newtheorem{theorem}{Theorem}
\newtheorem{lemma}[theorem]{Lemma}
\newtheorem{e-proposition}[theorem]{Proposition}
\newtheorem{corollary}[theorem]{Corollary}
\newtheorem{e-definition}[theorem]{Definition\rm}
\newtheorem{remark}{\it Remark\/}
\newtheorem{example}{\it Example\/}
\begin{document}

\date{}

\title[On the Borel-Cantelli lemma and its generalization]
{On the Borel-Cantelli lemma and its generalization}

\author{Chunrong Feng, Liangpan Li, Jian Shen}

\address{Department of Mathematics, Shanghai Jiao Tong University,
Shanghai 200240, China $\&$ Department of Mathematical Sciences,
Loughborough University, Leics, LE11 3TU,  UK}
 \email{fcr@sjtu.edu.cn}

\address{Department of Mathematics, Shanghai Jiao Tong University,
Shanghai 200240,  China $\&$ Department of Mathematics, Texas State
University, San Marcos, TX 78666, USA }
 \email{liliangpan@yahoo.com.cn}

\address{Department of Mathematics, Texas State University, San Marcos,
TX 78666, USA} \email{js48@txstate.edu}



\date{}

\begin{abstract}
Let $\{A_n\}_{n=1}^{\infty}$ be a sequence of events on a
probability space $(\Omega,\mathcal{F},\mathbf{P})$. We show that if
$\lim_{m\rightarrow\infty}\sum_{n=1}^{m}w_n\mathbf{P}(A_n)=\infty$
where each $w_n\in\mathbb{R}$, then
\[{\mathbf{P}}(\limsup A_n)\geq\limsup_{n\rightarrow\infty}
\frac{\displaystyle\big(\sum_{k=1}^n{w_k\mathbf{P}}(A_k)\big)^2}{\displaystyle\sum_{i=1}^n\sum_{j=1}^nw_iw_j{\mathbf{P}}(A_i\cap
A_j)}.\]
\end{abstract}

\maketitle

\section{Introduction}
Let $\{A_n\}_{n=1}^{\infty}$ be a sequence of events on a
probability space $(\Omega,\mathcal{F},\mathbf{P})$. The classical
Borel-Cantelli lemma states that: (a) if
$\sum_{n=1}^{\infty}\mathbf{P}(A_n)<\infty$, then
$\mathbf{P}(\limsup A_n)=0$; (b) if
$\sum_{n=1}^{\infty}\mathbf{P}(A_n)$ $=\infty$ and
$\{A_n\}_{n=1}^{\infty}$ are mutually independent, then
$\mathbf{P}(\limsup A_n)$ $=1$. Here $\limsup
A_n=\bigcap_{n=1}^{\infty}\bigcup_{k=n}^{\infty}A_k$.
 The
Borel-Cantelli lemma played an exceptionally important role in
probability theory. Many investigations were devoted to the second
part of the Borel-Cantelli lemma in the attempt to weaken the
independence condition on $\{A_n\}_{n=1}^{\infty}$.

Erd\"{o}s and R\'{e}nyi \cite{ErdosRenyi} proved that the mutual
independence condition on $\{A_n\}_{n=1}^{\infty}$ can be replaced
by the weaker condition of pairwise independence. Indeed they
\cite{Renyi} (see also \cite{Dawson,KochenStone,Spitzer}) proved a
more general theorem:  if
$\sum_{n=1}^{\infty}\mathbf{P}(A_n)=\infty$, then
\[\mathbf{P}(\limsup A_n)\geq\limsup_{n\rightarrow\infty}
\frac{\displaystyle\big(\sum_{k=1}^n{\mathbf{P}}(A_k)\big)^2}{\displaystyle\sum_{i=1}^n\sum_{j=1}^n{\mathbf{P}}(A_i\cap
A_j)}.\]

There are many discussions and generalizations towards the
Borel-Cantelli lemma, for example see
\cite{Amghibech,Mori,Ortega,Yan}. The main purpose of this paper is
to present a weighted version of
the Erd\"{o}s-R\'{e}nyi  theorem:\\

\begin{theorem} \label{mainresult}
Suppose
$\lim_{m\rightarrow\infty}\sum_{n=1}^{m}w_n\mathbf{P}(A_n)=\infty$,
where each $w_n$ is a real weight (which could be negative). Then
\[{\mathbf{P}}(\limsup A_n)\geq\limsup_{n\rightarrow\infty}
\frac{\displaystyle\big(\sum_{k=1}^n{w_k\mathbf{P}}(A_k)\big)^2}{\displaystyle\sum_{i=1}^n\sum_{j=1}^nw_iw_j{\mathbf{P}}(A_i\cap
A_j)}.\]\\
\end{theorem}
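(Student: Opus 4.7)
My approach is to adapt the classical Chung--Erd\"{o}s/Kochen--Stone second-moment argument to weighted indicator sums. For each $n$, set $T_n=\sum_{k=1}^n w_k\mathbf{1}_{A_k}$ and note that $T_n$ vanishes off $\bigcup_{k=1}^n A_k$, so Cauchy--Schwarz applied to $T_n=T_n\cdot\mathbf{1}_{\{T_n\neq 0\}}$ yields
\[
\Bigl(\sum_{k=1}^n w_k\mathbf{P}(A_k)\Bigr)^2=\bigl(\mathbf{E}[T_n\mathbf{1}_{\{T_n\neq 0\}}]\bigr)^2\le\mathbf{E}[T_n^2]\cdot\mathbf{P}\Bigl(\bigcup_{k=1}^n A_k\Bigr).
\]
Since $\mathbf{E}[T_n^2]=\sum_{i,j=1}^n w_iw_j\mathbf{P}(A_i\cap A_j)\ge 0$, this is exactly the claimed inequality for finite unions, and the signs of the $w_k$ enter only through the (still non-negative) quadratic form.

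To pass to $\limsup A_n=\bigcap_{m\ge1}\bigcup_{k\ge m}A_k$, I would apply the same estimate to the tail sequence $A_m,A_{m+1},\ldots,A_n$ with its own weights, giving
\[
\mathbf{P}\Bigl(\bigcup_{k=m}^n A_k\Bigr)\ \ge\ \frac{(a_n^{(m)})^2}{b_n^{(m)}},\quad a_n^{(m)}:=\sum_{k=m}^n w_k\mathbf{P}(A_k),\ b_n^{(m)}:=\sum_{i,j=m}^n w_iw_j\mathbf{P}(A_i\cap A_j).
\]
Sending $n\to\infty$ (continuity from below of $\mathbf{P}$) and then $m\to\infty$ (via $\mathbf{P}(\limsup A_n)=\lim_m\mathbf{P}(\bigcup_{k\ge m}A_k)$), the theorem reduces to showing, for each fixed $m$,
\[
\limsup_{n\to\infty}\frac{(a_n^{(m)})^2}{b_n^{(m)}}\ \ge\ L\ :=\ \limsup_{n\to\infty}\frac{a_n^2}{b_n},
\]
with $a_n=a_n^{(1)}$ and $b_n=b_n^{(1)}$.

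The main obstacle is exactly this asymptotic comparison. In the unweighted case one uses $b_n^{(m)}\le b_n$, a monotonicity that can fail when weights have mixed signs. My plan is to decompose $T_n=U_m+V_n^{(m)}$ with $U_m=\sum_{k<m}w_k\mathbf{1}_{A_k}$ and $V_n^{(m)}=\sum_{k=m}^n w_k\mathbf{1}_{A_k}$; expanding $b_n=\mathbf{E}[(U_m+V_n^{(m)})^2]$ and applying Cauchy--Schwarz to the cross term gives the quantitative estimate
\[
\bigl|b_n-b_n^{(m)}\bigr|\ \le\ \mathbf{E}[U_m^2]+2\sqrt{\mathbf{E}[U_m^2]\,b_n^{(m)}}.
\]
The hypothesis $a_n\to\infty$ forces $a_n^{(m)}=a_n-a_{m-1}\to\infty$, while Cauchy--Schwarz against the constant $1$ gives $b_n^{(m)}\ge(a_n^{(m)})^2\to\infty$. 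As $\mathbf{E}[U_m^2]$ depends only on $m$, the displayed inequality then yields $b_n/b_n^{(m)}\to 1$ and $(a_n^{(m)})^2/a_n^2\to 1$ as $n\to\infty$, so the ratios $(a_n^{(m)})^2/b_n^{(m)}$ and $a_n^2/b_n$ are asymptotically equal. Their limsups coincide, completing the argument.
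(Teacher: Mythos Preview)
Your argument is correct and is essentially the same as the paper's: both derive the weighted Chung--Erd\H{o}s inequality by Cauchy--Schwarz on $T_n=\sum_k w_k\chi_{A_k}$ and then prove tail-invariance of the limsup by bounding the cross term $\mathbf{E}[U_mV_n^{(m)}]$ via Cauchy--Schwarz together with $b_n^{(m)}\ge(a_n^{(m)})^2\to\infty$. The only cosmetic differences are that the paper packages these steps in matrix language (positive semi-definiteness of $(\mathbf{P}(A_i\cap A_j))$ and a block Cauchy--Schwarz, Lemmas~\ref{cauchy}--\ref{semiprob}) and peels off one initial event at a time (Lemma~\ref{same}, then iterates to get Proposition~\ref{prop}), whereas you work directly with the random variables and remove the first $m-1$ events in a single stroke.
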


The proof of Theorem \ref{mainresult} is relatively easy if we
further assume  all terms of the weight sequence to be nonnegative.
By choosing each $w_n = 1/{\mathbf{P}}(A_n)$ in
Theorem~\ref{mainresult}, we obtain the following corollary:\\

\begin{corollary}\label{interestedcorollary}
Suppose  ${\mathbf{P}}(A_n)>0$ holds for all $ n\in\mathbb{N}$. Then
\[{\mathbf{P}}(\limsup
A_n)\geq\limsup_{n\rightarrow\infty} \frac{\displaystyle n^2}
{\displaystyle\sum_{i=1}^n\sum_{j=1}^n{\frac{\displaystyle
\mathbf{P}(A_i\cap A_j)}{\displaystyle
\mathbf{P}(A_i)\mathbf{P}(A_j)}}}.\]\\
\end{corollary}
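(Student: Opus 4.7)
The plan is to derive Corollary \ref{interestedcorollary} as a direct specialization of Theorem \ref{mainresult}, which is already at our disposal. The only work is to verify that the weighted hypothesis is satisfied and to simplify the resulting ratio.

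First I would set $w_n := 1/\mathbf{P}(A_n)$ for every $n \in \mathbb{N}$; this is well defined by the assumption $\mathbf{P}(A_n) > 0$. With this choice one has $w_n \mathbf{P}(A_n) = 1$, so $\sum_{n=1}^{m} w_n \mathbf{P}(A_n) = m \to \infty$ as $m \to \infty$, and the hypothesis of Theorem \ref{mainresult} is met. This is the one place where the positivity assumption on $\mathbf{P}(A_n)$ is used.

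Next I would substitute this weight into the inequality provided by Theorem \ref{mainresult}. The numerator $\bigl(\sum_{k=1}^{n} w_k \mathbf{P}(A_k)\bigr)^{2}$ collapses to $n^{2}$, while each summand in the double sum of the denominator becomes
\[
w_i w_j \, \mathbf{P}(A_i \cap A_j) \;=\; \frac{\mathbf{P}(A_i \cap A_j)}{\mathbf{P}(A_i)\,\mathbf{P}(A_j)},
\]
yielding precisely the denominator displayed in the corollary.

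There is no real obstacle here; the only thing to be mindful of is simply checking that the weights are legitimate (finite and producing a divergent weighted sum), both of which are immediate consequences of $\mathbf{P}(A_n) > 0$. Combining the two simplifications above with Theorem \ref{mainresult} gives the stated bound on $\mathbf{P}(\limsup A_n)$ and completes the proof.
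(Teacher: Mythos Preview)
Your proof is correct and follows exactly the approach indicated in the paper: choose $w_n = 1/\mathbf{P}(A_n)$, verify that $\sum_{n=1}^m w_n\mathbf{P}(A_n)=m\to\infty$, and substitute into Theorem~\ref{mainresult}. There is nothing to add.
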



\section{Proof of the main result}

For any matrix $E=(z_{ij})_{m\times n}$,  denote by $\Gamma(E)$ the
sum of all its entries, that is,
$\Gamma(E)=\sum_{i=1}^m\sum_{j=1}^nz_{ij}$.

\begin{lemma}\label{cauchy}
Given a partition of an $(m+n)\times(m+n)$ symmetric matrix $E$:
$$E=\left ( \begin{array}{cc} A_{m\times m} & C_{m\times n} \\
C_{m\times n}^T & B_{n\times n}
\end{array} \right ).$$
If $E$ is positive semi-definite, then
$\Gamma(C)^2\leq\Gamma(A)\Gamma(B).$\\
\end{lemma}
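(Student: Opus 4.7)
The plan is to reduce the inequality to the fact that the all-ones vector has a nonnegative quadratic form against a positive semi-definite matrix. The observation is that for any matrix $M$, the quantity $\Gamma(M)$ is precisely $\mathbf{1}^T M \mathbf{1}$, where $\mathbf{1}$ is the all-ones column vector of the appropriate length.

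First, I would choose a test vector in $\mathbb{R}^{m+n}$ of the block form $v = (s\,\mathbf{1}_m^T,\, t\,\mathbf{1}_n^T)^T$ for scalars $s,t \in \mathbb{R}$. Expanding $v^T E v$ in blocks gives
\[
v^T E v \;=\; s^2\,\mathbf{1}_m^T A\,\mathbf{1}_m + 2st\,\mathbf{1}_m^T C\,\mathbf{1}_n + t^2\,\mathbf{1}_n^T B\,\mathbf{1}_n \;=\; s^2\Gamma(A) + 2st\,\Gamma(C) + t^2\Gamma(B).
\]
Positive semi-definiteness of $E$ forces this quadratic form in $(s,t)$ to be nonnegative for every choice of $s,t$.

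The main step is then to read off the desired inequality from the nonnegativity of this binary quadratic form. Assuming $\Gamma(A)>0$, fixing $t$ and treating the expression as a quadratic in $s$, the discriminant condition yields $\Gamma(C)^2 \leq \Gamma(A)\Gamma(B)$ directly. The only slight subtlety is the degenerate case $\Gamma(A)=0$ (and symmetrically $\Gamma(B)=0$): here the remaining linear-in-$s$ form $2st\,\Gamma(C) + t^2\Gamma(B)$ must stay nonnegative for all $s$, which forces $\Gamma(C)=0$, and the inequality becomes trivial. I do not anticipate any real obstacle; the argument is essentially Cauchy--Schwarz applied to the bilinear form induced by $E$ on the two constant vectors $\mathbf{1}_m$ and $\mathbf{1}_n$.
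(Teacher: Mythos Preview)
Your proposal is correct and matches the paper's own proof almost verbatim: the paper simply evaluates the quadratic form of $E$ at the vector $(x,\ldots,x,y,\ldots,y)$ to obtain $\Gamma(A)x^2+2\Gamma(C)xy+\Gamma(B)y^2\ge 0$ for all $x,y$, and concludes. Your treatment of the degenerate case $\Gamma(A)=0$ is more explicit than the paper's, but the underlying idea is identical.
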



\noindent {\it Proof: } This lemma follows from the following
inequality: $\forall x,y\in\mathbb{R}$,
\[(x,\ldots,x,y\ldots,y)E(x,\ldots,x,y\ldots,y)^{T}=\Gamma(A)x^2+2\Gamma(C)xy+\Gamma(B)y^2\geq0.\]
\hfill $\Box$\\

\begin{lemma}\label{semiprob} Let $\{A_i\}_{i=1}^{n}$ be finitely many events on a
probability space $(\Omega,\mathcal{F},\mathbf{P})$. Then the matrix
$\big(\mathbf{P}(A_i\cap A_j)\big)_{n\times n}$ is positive
semi-definite.\\
\end{lemma}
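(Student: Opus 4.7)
The plan is to exhibit the matrix $M=\bigl(\mathbf{P}(A_i\cap A_j)\bigr)_{n\times n}$ as a Gram-type matrix in $L^2(\Omega,\mathcal{F},\mathbf{P})$, from which positive semi-definiteness is automatic. The key observation I would use is the identity
\[
\mathbf{P}(A_i\cap A_j)=\mathbf{E}\bigl[\mathbf{1}_{A_i}\mathbf{1}_{A_j}\bigr],
\]
where $\mathbf{1}_{A}$ denotes the indicator random variable of an event $A$.

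Given an arbitrary real vector $x=(x_1,\ldots,x_n)^{T}\in\mathbb{R}^{n}$, I would compute the quadratic form $x^{T}Mx$ by interchanging the (finite) summation with the expectation. This yields
\[
x^{T}Mx=\sum_{i=1}^{n}\sum_{j=1}^{n}x_ix_j\,\mathbf{E}\bigl[\mathbf{1}_{A_i}\mathbf{1}_{A_j}\bigr]
=\mathbf{E}\!\left[\Bigl(\sum_{i=1}^{n}x_i\mathbf{1}_{A_i}\Bigr)^{\!2}\,\right].
\]
Since the integrand is a nonnegative random variable, its expectation is nonnegative, so $x^{T}Mx\geq 0$ for every $x\in\mathbb{R}^{n}$, which is precisely the definition of positive semi-definiteness.

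There is no real obstacle here: the only thing to be a little careful about is that all sums are finite, so the interchange of summation and expectation is legitimate without invoking any convergence theorem. The proof is essentially one line once the representation of $\mathbf{P}(A_i\cap A_j)$ as an $L^2$ inner product $\langle\mathbf{1}_{A_i},\mathbf{1}_{A_j}\rangle$ is recognized; equivalently, $M$ is the Gram matrix of the vectors $\mathbf{1}_{A_1},\ldots,\mathbf{1}_{A_n}$ in $L^2(\Omega,\mathcal{F},\mathbf{P})$, and Gram matrices are always positive semi-definite.
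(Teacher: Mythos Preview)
Your proof is correct and is essentially the same as the paper's: both write $\mathbf{P}(A_i\cap A_j)=\mathbb{E}[\mathbf{1}_{A_i}\mathbf{1}_{A_j}]$ and then evaluate the quadratic form as $\mathbb{E}\bigl[(\sum_i x_i\mathbf{1}_{A_i})^2\bigr]\geq 0$. Your additional remark that this identifies $M$ as a Gram matrix in $L^2(\Omega,\mathcal{F},\mathbf{P})$ is a nice way to package the same computation.
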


\noindent {\it Proof: }Let $\mathbb{E}(\cdot)$ be the expectation
function and $\chi_{A_i}$ be the indicator function of the event
$A_i$. Then $\mathbf{P}(A_i)=\mathbb{E}(\chi_{A_i})$ and
$\mathbf{P}(A_i\cap A_j)=\mathbb{E}(\chi_{A_i}\chi_{A_j})$. For each
$(s_1,s_2,\ldots,s_n)$ $\in\mathbb{R}^n$,
\begin{align*}
(s_1,s_2,\ldots,s_n)\big(\mathbf{P}(A_i\cap
A_j)\big)(s_1,s_2,\ldots,s_n)^{T}&=\sum_{i=1}^n\sum_{j=1}^ns_is_j\mathbf{P}(A_i\cap
A_j)
=\sum_{i=1}^n\sum_{j=1}^ns_is_j\mathbb{E}(\chi_{A_i}\chi_{A_j})\\
&=\mathbb{E}\big(\sum_{i=1}^n\sum_{j=1}^ns_is_j\chi_{A_i}\chi_{A_j}\big)=\mathbb{E}\big((\sum_{i=1}^ns_i\chi_{A_i})^2\big)\geq0.
\end{align*} This proves the lemma.
\hfill $\Box$\\

\begin{lemma}\label{same} Suppose
$\lim_{m\rightarrow\infty}\sum_{n=1}^{m}w_n{\mathbf{P}}(A_n)=\infty$,
where each $w_n \in\mathbb{R}.$
 Then
\[\lim_{n\rightarrow\infty}
\frac{\displaystyle\sum_{i=1}^n\sum_{j=1}^nw_iw_j{\mathbf{P}}(A_i\cap
A_j)}
{\displaystyle\sum_{i=2}^n\sum_{j=2}^nw_iw_j{\mathbf{P}}(A_i\cap
A_j)}=1.\]\\
\end{lemma}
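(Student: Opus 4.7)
Write $S_n := \sum_{i,j=1}^n w_iw_j\mathbf{P}(A_i\cap A_j)$ for the numerator and $T_n := \sum_{i,j=2}^n w_iw_j\mathbf{P}(A_i\cap A_j)$ for the denominator. The goal is to show that $S_n-T_n=O(\sqrt{T_n})$ while $T_n\to\infty$; the ratio $S_n/T_n$ then necessarily tends to $1$. A direct enumeration of the terms lost when the index set is trimmed from $\{1,\ldots,n\}$ to $\{2,\ldots,n\}$ yields the exact identity
\[S_n - T_n \;=\; 2w_1\sum_{j=2}^n w_j\mathbf{P}(A_1\cap A_j) + w_1^2\mathbf{P}(A_1),\]
so it remains to estimate the cross sum and to verify that $T_n\to\infty$.

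For the cross sum, the decisive ingredient is Lemma~\ref{semiprob}: the matrix $M_n:=\bigl(\mathbf{P}(A_i\cap A_j)\bigr)_{i,j=1}^n$ is positive semi-definite, so the bilinear form $Q_n(x,y):=x^TM_ny$ satisfies Cauchy--Schwarz, $Q_n(x,y)^2\le Q_n(x,x)\,Q_n(y,y)$. Taking $x=(w_1,0,\ldots,0)^T$ and $y=(0,w_2,\ldots,w_n)^T$ gives
\[\Bigl(w_1\sum_{j=2}^n w_j\mathbf{P}(A_1\cap A_j)\Bigr)^{2}\;\le\; w_1^2\mathbf{P}(A_1)\cdot T_n,\]
whence $|S_n-T_n|\le 2|w_1|\sqrt{\mathbf{P}(A_1)\,T_n}+w_1^2\mathbf{P}(A_1)$.

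To show $T_n\to\infty$, I would apply the ordinary $L^2(\Omega,\mathbf{P})$ Cauchy--Schwarz inequality to $f:=\sum_{k=2}^n w_k\chi_{A_k}$ and $g:=\chi_{\bigcup_{k=2}^n A_k}$. Since $fg=f$, this yields
\[\Bigl(\sum_{k=2}^n w_k\mathbf{P}(A_k)\Bigr)^{2}\;\le\; T_n\cdot\mathbf{P}\Bigl(\bigcup_{k=2}^n A_k\Bigr)\;\le\; T_n.\]
The left-hand side differs from $\sum_{k=1}^n w_k\mathbf{P}(A_k)$ by the constant $w_1\mathbf{P}(A_1)$, so by hypothesis it diverges, forcing $T_n\to\infty$. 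Dividing the estimate in the previous paragraph by $T_n$ now gives $|S_n/T_n-1|\to 0$, which is the claimed limit.

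The only point requiring care is that the weights $w_k$ may be negative, which kills any naive attempt to bound the cross sum via $\mathbf{P}(A_1\cap A_j)\le\mathbf{P}(A_j)$: the tail of $\sum|w_j|\mathbf{P}(A_j)$ is not controlled by the hypothesis, so absolute values cannot simply be inserted. Positive semi-definiteness of $M_n$ is exactly what allows Cauchy--Schwarz to do the work without absolute values, and is the single substantive idea behind the proof.
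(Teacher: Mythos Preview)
Your proof is correct and follows essentially the same route as the paper: the paper's Lemma~\ref{cauchy} (the block inequality $\Gamma(C_n)^2\le\Gamma(A)\Gamma(B_n)$ for a positive semi-definite matrix) is exactly your Cauchy--Schwarz bound $Q_n(x,y)^2\le Q_n(x,x)Q_n(y,y)$ with the two specific vectors you chose, and your $L^2$ argument that $T_n\to\infty$ via $f=\sum_{k\ge2}w_k\chi_{A_k}$ and $g=\chi_{\bigcup A_k}$ is verbatim the paper's. The only cosmetic difference is that the paper separates the PSD Cauchy--Schwarz step out as its own lemma, whereas you invoke it inline.
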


\noindent {\it Proof: }By Lemma \ref{semiprob},
$E_n\doteq\Big(w_iw_j\mathbf{P}(A_i\cap A_j)\Big)_{n\times n}$ is
positive semi-definite. Define
\[
A=\Big(w_1w_1\mathbf{P}(A_1\cap A_1)\Big),\ \
B_n=\Big(w_iw_j\mathbf{P}(A_i\cap A_j)\Big)_{2\leq i, j\leq n},\ \
C_n=\Big(w_1w_j\mathbf{P}(A_1\cap A_j)\Big)_{2\leq j\leq n}.
\]
By Lemma \ref{cauchy},  $\Gamma(C_n)^2\leq \Gamma(A)\Gamma(B_n)\ \
(\forall n\geq2)$. By the Cauchy-Schwarz inequality,
\begin{align*}
\label{comparison}\big(\sum_{i=2}^nw_i\mathbf{P}(A_i)\big)^2 &
=\Big(\mathbb{E}\big(\sum_{i=2}^nw_i\chi_{A_i}\big)\Big)^2 \leq
\mathbf{P}(\bigcup_{i=2}^nA_i)\cdot
\mathbb{E}\Big(\big(\sum_{i=2}^nw_i\chi_{A_i}\big)^2\Big) \\ & =
\mathbf{P}(\bigcup_{i=2}^nA_i)\cdot
\big(\sum_{i=2}^n\sum_{j=2}^n{w_iw_j{\mathbf{P}}(A_i\cap A_j)}\big)
\leq \Gamma(B_n).
\end{align*}
Since
$\lim_{n\rightarrow\infty}\sum_{i=2}^{n}w_i{\mathbf{P}}(A_i)=\infty$,
we have $\Gamma(B_n)\rightarrow\infty$ as $n$ approaches to
infinity. Hence
\[\lim_{n\rightarrow\infty}\frac{\Gamma(A)+\Gamma(B_n)+2\Gamma(C_n)}{\Gamma(B_n)}=1+
\lim_{n\rightarrow\infty}\frac{2\Gamma(C_n)}{\Gamma(B_n)}=1.\]
This proves the lemma. \hfill $\Box$\\

\begin{remark} We obtained the following by-product from the proof of the above lemma:
\begin{equation} \label{weighted Chung-Erdos inequality}
\big(\sum_{i=1}^nw_i\mathbf{P}(A_i)\big)^2\leq\mathbf{P}(\bigcup_{i=1}^nA_i)\cdot
\big(\sum_{i=1}^n\sum_{j=1}^n{w_iw_j{\mathbf{P}}(A_i\cap A_j)}\big).
\end{equation}
This formula can be viewed as a weighted version of the
Chung-Erd\"{o}s inequality (\cite{ChungErdos}).\\
\end{remark}

\begin{e-proposition}\label{prop} Suppose
$\lim_{m\rightarrow\infty}\sum_{n=1}^{m}w_n{\mathbf{P}}(A_n)=\infty$,
where each $w_n \in\mathbb{R}.$ Then for all $ s\in\mathbb{N}$,
\[\limsup_{n\rightarrow\infty}
\frac{\displaystyle\big(\sum_{k=1}^nw_k{\mathbf{P}}(A_k)\big)^2}{\displaystyle\sum_{i=1}^n\sum_{j=1}^n{w_iw_j{\mathbf{P}}(A_i\cap
A_j)}}= \limsup_{n\rightarrow\infty}
\frac{\displaystyle\big(\sum_{k=s}^nw_k{\mathbf{P}}(A_k)\big)^2}{\displaystyle\sum_{i=s}^n\sum_{j=s}^n{w_iw_j{\mathbf{P}}(A_i\cap
A_j)}}.\]\\
\end{e-proposition}

Proposition \ref{prop} is an immediate corollary of Lemma
\ref{same}. With all the above preparation in hand, we are ready
to prove Theorem \ref{mainresult}.\\

\noindent  \emph{Proof of Theorem \ref{mainresult}}: By
(\ref{weighted Chung-Erdos inequality}) and Proposition \ref{prop},
\begin{align*}
{\mathbf{P}}(\limsup
A_n)&=\mathbf{P}(\bigcap_{s=1}^{\infty}\bigcup_{k=s}^{\infty}A_k)
=\lim_{s\rightarrow\infty}{\mathbf{P}}(\bigcup_{k=s}^{\infty}A_k)
=\lim_{s\rightarrow\infty}\Big(\lim_{n\rightarrow\infty}{\mathbf{P}}(\bigcup_{k=s}^{n}A_k)\Big)\\
&\geq
\lim_{s\rightarrow\infty}\Big(\limsup_{n\rightarrow\infty}\frac {
\left ( \displaystyle \sum _{k=s}^n w_k{\mathbf{P}}(A_k) \right ) ^2
} {\displaystyle \sum _{i=s}^n\sum _{j=s}^nw_iw_j
{\mathbf{P}}(A_i\cap A_j)}\Big) =\limsup_{n\rightarrow\infty}\frac {
\left ( \displaystyle \sum _{k=1}^n w_k{\mathbf{P}}(A_k) \right ) ^2
} {\displaystyle \sum _{i=1}^n\sum _{j=1}^n
w_iw_j{\mathbf{P}}(A_i\cap A_j)}.
\end{align*}
This completes the proof of Theorem \ref{mainresult}.\\

\begin{corollary}
Let $\{w_n\geq0\}_{n=1}^{\infty}$ be a bounded sequence with
$\sum_{n=1}^{\infty}w_n\mathbf{P}(A_n)=\infty$. Then
\[{\mathbf{P}}(\limsup A_n)\geq\limsup_{n\rightarrow\infty}
\frac{\displaystyle\sum_{1\leq i<j\leq
n}w_iw_j\mathbf{P}(A_i)\mathbf{P}(A_j)}{\displaystyle\sum_{1\leq
i<j\leq n}w_iw_j\mathbf{P}(A_i\cap A_j)}.\]
\end{corollary}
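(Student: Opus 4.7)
The plan is to reduce the stated bound to Theorem~\ref{mainresult} by separating diagonal and off-diagonal contributions in the double sums. Write
\[\sigma_n = \sum_{k=1}^n w_k\mathbf{P}(A_k),\quad D_{1,n} = \sum_{i=1}^n w_i^2\mathbf{P}(A_i)^2,\quad D_{2,n} = \sum_{i=1}^n w_i^2\mathbf{P}(A_i),\]
and let $S_n$ and $T_n$ denote the numerator and the denominator of the right-hand side of the claimed inequality (the sums over $1\leq i<j\leq n$). Splitting off the diagonal gives the identities $\sigma_n^2 = 2S_n + D_{1,n}$ and $\sum_{i,j=1}^n w_iw_j\mathbf{P}(A_i\cap A_j) = 2T_n + D_{2,n}$, so Theorem~\ref{mainresult} reads
\[\mathbf{P}(\limsup A_n)\geq\limsup_{n\to\infty}\frac{2S_n + D_{1,n}}{2T_n + D_{2,n}},\]
and it suffices to show that this $\limsup$ is at least $\limsup_n S_n/T_n$.

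Let $M$ be an upper bound for the sequence $w_n$. Since $\mathbf{P}(A_i)\leq 1$, both $w_i^2\mathbf{P}(A_i)^2$ and $w_i^2\mathbf{P}(A_i)$ are bounded by $Mw_i\mathbf{P}(A_i)$, so $D_{1,n},D_{2,n}\leq M\sigma_n$. The identity $2S_n = \sigma_n^2 - D_{1,n}$ then gives $S_n\geq(\sigma_n^2 - M\sigma_n)/2$, so $S_n$ grows at rate $\sigma_n^2/2$ and $D_{1,n}/S_n = O(1/\sigma_n)\to 0$.

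The matching lower bound on $T_n$ is the main step and uses the weighted Chung-Erd\"{o}s inequality~(\ref{weighted Chung-Erdos inequality}) proved in the Remark: since $\mathbf{P}(\bigcup_{i=1}^n A_i)\leq 1$, that inequality yields $\sigma_n^2\leq 2T_n + D_{2,n}\leq 2T_n + M\sigma_n$, whence $T_n\geq(\sigma_n^2 - M\sigma_n)/2$. Thus $T_n$ also grows at least like $\sigma_n^2/2$ and $D_{2,n}/T_n = O(1/\sigma_n)\to 0$. The hard part is exactly this control: without the boundedness of $w_n$ one cannot dominate $D_{2,n}$ by $\sigma_n$, and the off-diagonal sum $T_n$ could be swamped by the diagonal correction.

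Finally, the factorization
\[\frac{2S_n + D_{1,n}}{2T_n + D_{2,n}} = \frac{S_n}{T_n}\cdot\frac{2 + D_{1,n}/S_n}{2 + D_{2,n}/T_n}\]
has second factor tending to $1$. Given $\epsilon>0$, for all sufficiently large $n$ one has $(2S_n + D_{1,n})/(2T_n + D_{2,n})\geq(1-\epsilon)S_n/T_n$, so $\limsup_n(2S_n + D_{1,n})/(2T_n + D_{2,n})\geq(1-\epsilon)\limsup_n S_n/T_n$; letting $\epsilon\to 0$ completes the proof.
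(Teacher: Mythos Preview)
Your proof is correct and follows essentially the same route as the paper: apply Theorem~\ref{mainresult}, use the weighted Chung--Erd\H{o}s inequality~(\ref{weighted Chung-Erdos inequality}) together with the boundedness of $\{w_n\}$ to show the diagonal contribution $D_{2,n}$ is negligible compared to the full denominator, and pass to the off-diagonal ratio. The only minor difference is that you separately prove $D_{1,n}/S_n\to 0$, whereas the paper simply drops $D_{1,n}\geq 0$ from the numerator (your inequality (A2) analogue) and works only with the denominator correction; your extra step is harmless but unnecessary.
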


\begin{proof}
By the weighted version of the Chung-Erd\"{o}s inequality
(\ref{weighted Chung-Erdos inequality}),
\[\sum_{i=1}^n\sum_{j=1}^n{w_iw_j{\mathbf{P}}(A_i\cap A_j)}\geq\big(\sum_{i=1}^nw_i\mathbf{P}(A_i)\big)^2\ \ \ \ (\forall n\in\mathbb{N}),\]
which yields
\begin{equation}\label{A1}\lim_{n\rightarrow\infty}\frac{\displaystyle\sum_{i=1}^nw_iw_i{\mathbf{P}}(A_i\cap A_i)}{
\displaystyle\sum_{i=1}^n\sum_{j=1}^n{w_iw_j{\mathbf{P}}(A_i\cap
A_j)}}=0\end{equation} by considering $\{w_n\geq0\}_{n=1}^{\infty}$
is a bounded sequence with
$\sum_{n=1}^{\infty}w_n\mathbf{P}(A_n)=\infty$. Note also
\begin{equation}\label{A2}
\frac{\displaystyle\big(\sum_{k=1}^n{w_k\mathbf{P}}(A_k)\big)^2}{\displaystyle\sum_{i=1}^n\sum_{j=1}^nw_iw_j{\mathbf{P}}(A_i\cap
A_j)} \geq \frac{\displaystyle2\cdot\sum_{1\leq i<j\leq
n}w_iw_j\mathbf{P}(A_i)\mathbf{P}(A_j)}{\displaystyle\sum_{i=1}^n\sum_{j=1}^n{w_iw_j{\mathbf{P}}(A_i\cap
A_j)}}.\end{equation} Combining (\ref{A1}), (\ref{A2}) and Theorem
\ref{mainresult} yields the desired result.
\end{proof}



\begin{example}Let $(\Omega,\mathcal{F},\mathbf{P})$ be a probability space,
$A,B\in\mathcal{F}$, $\mathbf{P}(A\cap B)>0$. For all
$n\in\mathbb{N}$, let $A_{3n-2}=A$, $A_{3n-1}=B$, $A_{3n}=A\cap B$.
By the Erd\"{o}s-R\'{e}nyi  theorem,
\[{\mathbf{P}}(\limsup
A_n)\geq\frac{\big(\mathbf{P}(A)+\mathbf{P}(B)+\mathbf{P}(A\cap
B)\big)^2}{\mathbf{P}(A)+\mathbf{P}(B)+7\mathbf{P}(A\cap B)}.\]
Applying Theorem \ref{mainresult} with the weight sequence
$1,1,-1,1,1,-1,1,1,-1,\ldots$, we obtain
\[{\mathbf{P}}(\limsup
A_n)\geq \mathbf{P}(A)+\mathbf{P}(B)-\mathbf{P}(A\cap
B)=\mathbf{P}(A\cup B).\] In fact ${\mathbf{P}}(\limsup
A_n)=\mathbf{P}(A\cup B)$. So Theorem~\ref{mainresult} is best
possible for this example.
\end{example}




\section*{Acknowledgements}
Feng would like to acknowledge
the support of National Basic Research Program of China (973
Program No. 2007CB814903), National Natural Science Foundation of
China (No. 70671069) and the Mathematical Tianyuan Foundation of
China (No. 10826090). Li's research was partially supported by the
Mathematical Tianyuan Foundation of China (No. 10826088). Shen's
research was partially supported by NSF (CNS 0835834) and Texas
Higher Education Coordinating Board (ARP 003615-0039-2007).

\end{document}